\documentclass[letterpaper, 10 pt, conference]{ieeeconf}  

\IEEEoverridecommandlockouts                              

\overrideIEEEmargins                                      



\usepackage{graphics} 
\usepackage{epsfig} 
\usepackage{epstopdf}
\usepackage{amsmath} 
\usepackage{amssymb}  
\usepackage{amsfonts}
\usepackage{subfigure}
\usepackage{nicefrac}


\usepackage{xspace}
\usepackage{soul}
\usepackage{mathtools}
\usepackage{makecell}
\usepackage{todonotes}
\usepackage{nicefrac}

\newcommand{\ra}{\rightarrow}

\newcommand{\ie}{\unskip, i.\,e.,\xspace}
\newcommand{\eg}{\unskip, e.\,g.,\xspace}

\newcommand{\sut}{\text{s.\,t.\,}}

\newcommand{\nrm}[1]{\left\lVert#1\right\rVert}

\newcommand{\abs}[1]{\left\lvert#1\right\rvert}
\newcommand{\scal}[1]{\left\langle#1\right\rangle}
\newcommand{\tr}[1]{\ensuremath{\text{tr}}\left(#1\right)}
\newcommand{\E}[1]{\mathbb E\left[#1\right]}
\newcommand{\PP}[1]{\mathbb P\left[#1\right]}

\newcommand{\N}{\ensuremath{\mathbb{N}}}

\newcommand{\R}{\ensuremath{\mathbb{R}}}

\newcommand{\X}{\ensuremath{\mathbb{X}}}

\newcommand{\U}{\ensuremath{\mathbb{U}}}

\newcommand*\diff{\mathop{}\!\mathrm{d}}
\newcommand{\eps}{\ensuremath{\varepsilon}}

\newcommand{\spc}{\ensuremath{\,\,}}

\newcommand{\ball}{\ensuremath{\mathcal B}}

\newcommand{\co}{\ensuremath{\overline{\text{co}}}}

\DeclareMathOperator*{\argmin}{arg\,min}


\definecolor{dgreen}{rgb}{0.0, 0.5, 0.0}


\newtheorem{dfn}{Definition}

\newtheorem{thm}{Theorem}
\newtheorem{crl}{Corollary}
\newtheorem{rem}{Remark}

\newcommand{\SH}{S\&H\xspace}

\newcommand{\D}{\ensuremath{\mathcal{D}}}
\newcommand{\A}{\ensuremath{\mathcal{A}}}
\newcommand{\K}{\ensuremath{\mathcal{K}}}

\newcommand{\lip}[2]{\ensuremath{\text{Lip}_{#2}\left(#1\right)}}

\newcommand{\bigo}[1]{\ensuremath{\mathcal O \left(#1\right)}}

\newcommand{\Var}[1]{\mathbb V\left[#1\right]}

\newcommand{\as}{a.~s.\xspace}

\title{On stochastic stabilization of sampled systems}

\author{Pavel Osinenko, Grigory Yaremenko
\thanks{Email: \texttt{p.osinenko@yandex.ru}.
}
}

\begin{document}

\maketitle
\thispagestyle{empty}
\pagestyle{empty}

\begin{abstract}
This paper addresses stochastic stabilization in case where implementation of control policies is digital, i. e., when the dynamical system is treated continuous, whereas the control actions are held constant in predefined time steps.
In such a setup, special attention should be paid to the sample-to-sample behavior of the involved Lyapunov function.
This paper extends on the stochastic stability results specifically to address for the sample-and-hold mode.
We show that if a Markov policy stabilizes the system in a suitable sense, then it also practically stabilizes it in the sample-and-hold sense.
This establishes a bridge from an idealized continuous application of the policy to its digital implementation.
The central result applies to dynamical systems described by stochastic differential equations driven by the standard Brownian motion.
Generalizations are discussed, including the case of non-smooth Lyapunov functions for systems driven by bounded noise.
A brief overview of bounded noise models is given.
\end{abstract}


\section{Introduction}\label{sec:intro}

Stochastic systems are an important abstraction of practical objects and processes such as stock markets, noisy devices, chemical reactions etc.
Studying stability of such systems is challenging as compared to the deterministic case as many results from the real analysis do not generalize to the stochastic case.
A major advancement of the classical results on stability of stochastic differential equations (SDE) due to Khasminskii \cite{khasminskii201-stochastic}, Kushner \cite{Kushner1965stochastic-stability}, and Mao \cite{mao1991-stability} was made by Deng et al. \cite{Deng2001-stochastic-stab-noise} who adopted the prominent $\K$-function techniques of Khalil \cite{Khalil1996-nonlin-sys} to the stochastic case.
However, Deng et al. \cite{Deng2001-stochastic-stab-noise}, while studying asymptotic stability in probability, did not provide explicit convergence rates.
Similar principles to \cite{Deng2001-stochastic-stab-noise} found wide application in various stochastic stability analyses, including discrete systems \cite{McAllister2003-stochastic}, cascaded systems \cite{Liu2008-stochastic}, delayed systems \cite{Liu2011-stochastic}, systems with input saturation \cite{Li2017-Stochastic}, systems with state-dependent switching \cite{Wu2013-stochastic},
hybrid systems \cite{Teel2014-stochastic-stability} etc.
Results on practical stochastic stability are known (see \eg some recent ones in \cite{caraballo2015-practical-stochastic,Qin2020-stochastic,Do2020-stochastic}), whereas relatively few results on stochastic stabilization by sampled control are known and they address specific contexts \eg based on approximate discrete-time models \cite{fu2016sampled,yu2018sampled}.
The of this work is to study practical stabilization of stochastic dynamical systems of the class
\begin{equation}
	\label{eqn:sys}
	\diff  X_t = f(X_t, U_t) \diff t + \sigma (X_t, U_t) \diff W_t, X_0 = x_0 \text{~a.~s.},	
\end{equation}
where $\{X_t\}_t, \{U_t\}_t$ are the state and, respectively, control stochastic processes, $\R^n$-, respectively, $\R^m$-valued; $f: \R^n \times \R^m \ra \R^n, \sigma: \R^n \times \R^m \ra \R^{n \times d}$;
$\{W_t\}_t$ is the standard $d$-dimensional Brownian motion.
The technical goal of this work is to study stabilization of \eqref{eqn:sys} by Markov control policies in the so-called sample-and-hold sense (\SH) \ie where control actions are held constant during time steps of pre-defined constant length.


\textbf{Contribution.} A \SH-analysis technique of Clarke et al. \cite{Clarke2011-discont-stabilization} for practical stabilization is generalized to the stochastic case, which presents a major extension of the previous results.
To this end, elements of the stochastic process theory are employed.
We show in Theorem \ref{thm:stoch-stab} that if a Markov policy stabilizes a given system in mean up to a certain best limit, which depends only on the noise characteristics, then it also practically stabilizes the system up to the same best limit, when implemented in the \SH-mode, whenever certain conditions on the system and the related Lyapunov function hold.
Generalizations to the case of a non-smooth control Lyapunov functions are addressed in Section \ref{sec:applications} (see, in particular, Theorem \ref{thm:stoch-stab-nonsmooth}).

\textbf{Notation}: capital letters denote random variables or, provided with a time index, values of stochastic processes, unless specified otherwise.
Probability measure is denoted $\mathbb P$, expected value operator -- $\mathbb E$, variance -- $\mathbb V$.
A closed ball with a radius $r$ centered at $x$ is denoted $\ball_r(x)$, or just $\ball_r$ if $x=0$.
Class kappa and kappa-infinity functions are denoted $\K, \K_\infty$.
A Lipschitz constant of a function $L$ on a ball $\ball_R$ is denoted $\lip L R$.
The notation $a \land b$, $a \lor b$ for some numbers $a, b$ is a shorthand for $\min\{a, b\}$, respectively, $\max\{a, b\}$.
The gradient vectors are treated as row vectors.
Scalar product is denoted as $\scal{\bullet, \bullet}$. 

\textbf{Abbreviations}: ``sample-and-hold'': \SH. ``stochastic differential equation'': SDE. ``Almost surely'': \as

\section{Preliminaries}\label{sec:prelim}


We start with assuming that the strong solutions (also refereed to as ``trajectories'' throughout) are adapted to the augmented filtration $\{\mathcal F_t\}$ generated by the Brownian motion $\{W_t\}_t$.
Denote $f^\mu := f(x, \mu(x)), \sigma^\mu := \sigma(x, \mu(x))$ for a Markov policy $\mu$.
The generator of the SDE of \eqref{eqn:sys}, for a smooth function $L$, is defined as follows:
\begin{equation}
	\label{eqn:stoch-generator}
	\begin{aligned}
		\A^\mu L(x) = \nabla L f^\mu(x) + \frac 1 2 \tr{(\sigma^\mu(x))^\top \nabla^2 L(x) \sigma^\mu(x)},		
	\end{aligned}
\end{equation}
where $\nabla L$ is the gradient vector and $\nabla^2 L$ is the Hessian \ie the matrix of second-order derivatives.
In the \SH mode, the above stochastic system \eqref{eqn:sys} reads: 
\begin{equation}
	\label{eqn:sys-SH}
	\begin{aligned}
		\diff  X_t & = f(X_t, U^\delta_t) \diff t + \sigma (X_t, U^\delta_t) \diff W_t, X_0 = x_0 \spc \as \\
		U^\delta_t & \equiv U_k, t \in [k \delta, (k+1) \delta],
	\end{aligned}
\end{equation}
where $\delta$ refers to the sampling step size (or, briefly, sampling time).
In particular, applying a Markov policy $\mu$ to \eqref{eqn:sys} in \SH mode means $U_k = \mu(X_{k \delta})$
We will also use the notation $\mu^\delta$ in this case. 
Denote, for any $R$, $\bar f^\mu_R := \sup_{x \in \ball_R} \nrm{f^\mu(x)}, \bar \sigma^\mu_R := \sup_{x \in \ball_R} \nrm{\sigma^\mu(x)}$, and $b_R := \sup_{x \in \ball_R} \nrm{\nabla L(x)}, b'_R := \sup_{x \in \ball_R} \nrm{\nabla^2 L(x)}$.
Define an operator $\Gamma^\mu_R L := \bar f^\mu_{R} \big( \lip{\nabla L}{R} \bar f^\mu_{R} + b_{R} \lip{f}{R} + \bar \sigma_{R} b'_{R} \lip{\sigma}{R} + \frac 1 2 (\bar \sigma_{R}^{\mu})^2 \lip{\nabla^2 L}{R} \big)$ where $\lip{f}{R}, \lip{\sigma}{R}$ are Lipschitz constants in the first arguments.
Now, proceed to the necessary definitions for stability.

\begin{dfn}[Stability in probability]
	\label{dfn:stab-prob}
	The origin of the system \eqref{eqn:sys} is said to be stable in probability if $\exists \kappa \in \K, \lim_{\eps \ra \infty} \kappa(\eps)=1  \spc \forall \eps>0 \spc \exists A,c>0$, \sut
	\begin{equation}
		\label{eqn:stab-prob}
		 x_0 \in \ball_A \implies \forall t \ge 0 \spc \PP{X_t \in \ball_{A+c \eps}} \ge \kappa(\eps).
	\end{equation}
\end{dfn}



\begin{dfn}[$(r,R)$-convergence in mean]
	\label{dfn:conv-mean}
	A trajectory $\{X_t\}_t$ of a stochastic system \eqref{eqn:sys} is said to converge in mean if the following conditions hold:
	\begin{equation}
		\label{eqn:conv-mean}
		\begin{aligned}
			& x_0 \in \ball_R \implies \limsup_{t \ra \infty} \E{\nrm{X_t}} \le r.
		\end{aligned}
	\end{equation}
	Moreover, the reaching time $T_{(r, R)} := \inf_{t \ge 0}\{ \E{\nrm{X_t}} \le r\}$ depends uniformly on $r, R$.
\end{dfn}

\begin{rem}
	The $(r,R)$-convergence in mean can be understood as a kind of practical stability: provided that the initial state was within some starting ball \as, the mean trajectory eventually enters a target ball of radius $r$ and stays there forever.
\end{rem}

\begin{rem}
	Definition \ref{dfn:conv-mean} differs from the usual convergence in mean of the form $\lim_{t \ra \infty} \E{\nrm{X_t}} = 0$ in the sense that we encode the numbers $r, R$ because the main result in the said section is semi-global (see also Remark \ref{rem:semi-glob} below): given a starting ball of radius $R$, a target one of radius $r$, there is a bound on sampling time \sut the sampled policy stabilizes the trajectory into $\ball_r$ in mean.	
	It is, in general, impossible to guarantee convergence in mean to zero as in the standard definition mentioned above.
\end{rem}

The next definition is the central tool for stabilization, namely, a nominal Lyapunov pair consisting of a Lyapunov function $L$ and a Markov policy $\mu$.
The key result of this work lies in investigation of the system behavior under \textit{sampled} policy, that renders the control actions $U_t \equiv \mu(X_{k\delta}), t \in [k \delta, (k+1) \delta]$.

\begin{dfn}[Stochastic Lyapunov pair]
	\label{dfn:stoch-L-pair}
	A stochastic Lyapunov pair $(L, \mu)$ is for the system $\eqref{eqn:sys}$ a pair of functions if:
	$L \in \mathcal C^2$;
	there exist $\bar \alpha_1, \bar \alpha_2 > 0$, $\alpha_3 \in \K_\infty$ with $\alpha_3 \circ \sqrt{\nrm{x}}$ convex; 
	there exists $\alpha_4 \in \K_\infty \spc \forall R \spc \Gamma^\mu_R L \le \alpha_4(R)$ \sut $\alpha_4 \circ \sqrt{R}$ is concave;	
	(monotone condition cf. \cite{mao2007stochastic,yu2018sampled,fu2016sampled}) there exists $K > 0, K_{\mu} \spc \sut \forall x, \mu(x) \in \ball_{K_\mu} \land \forall x, u \in \ball_{K_\mu} \spc x^\top f(x, u) + \tfrac 1 2 \nrm{\sigma(x, u)}^2 \le K (1 + \nrm{x}^2)$; 
	the following properties hold:
	\begin{align}
		\label{eqn:LF-alphas12}
		& \forall x \spc \bar \alpha_1 \nrm{x}^2 \leq L(x) \leq \bar \alpha_2 \nrm{x}^2 \\
		\label{eqn:LF-alpha3}
		& \forall x \spc \A^\mu L(x) \le - \alpha_3(\nrm{x}) + \bar \Sigma, \bar \Sigma > 0.
	\end{align}
\end{dfn}

\begin{rem}
	The number $\bar \Sigma > 0$ in Definition \ref{dfn:stoch-L-pair} is related to the noise and differentiates the stochastic case from a deterministic one, which typically possesses a decay condition of the kind $\scal{\nabla L(x), f(x, \mu(x))} \le - \alpha_3(\nrm{x})$.
\end{rem}

\begin{rem}
	In general, for a function $\varphi$, the Jensen's gap \ie $\E{\varphi(X)} - \varphi( \E{X} )$ can be arbitrarily large.
	To relate various expected values in the analysis of Theorem \ref{thm:stoch-stab}, the Jensen's inequality has to be utilized whence the stated conditions.  
	Notice \eg \cite[Lemma~2.1]{Reif1999-EKF-stab} also used quadratic bounding functions of $L$.
	A condition $\A^\mu L(x) = - c L + \bar \Sigma, c > 0$ (cf. \cite[Theorem~4.1]{Deng2001-stochastic-stab-noise}) also fits the assumptions since $- c L \le - c \bar \alpha_1 \nrm{x}^2$ and $\alpha_3$ is thus effectively $c \bar \alpha_1 \nrm{x}^2$ and so $\alpha_3 \circ \sqrt{\nrm{x}}$ is convex.
	The function $\alpha_4 \circ \sqrt{\nrm{x}}$ being concave is satisfied if \eg $L$ is quadratic, $f^\mu$ is Lipschitz and of linear growth, $\sigma^\mu$ is Lipschitz and bounded.
	This condition may be seen as restrictive, although \eg linear growth is often assumed in stochastic stabilization in mean (see, for instance, \cite{li2021stochastic,lan2017global,yang2011mean,fu2016sampled,yu2018sampled}).
	The monotone condition is in the style of \cite{mao2007stochastic} and is weaker than in related works on sampled stochastic stabilization (see \eg \cite{yu2018sampled,fu2016sampled}), whereas it should be noted that universal formulas with bounded controls are known \cite{Lin1991-stabilization-bounded-controls}.
	Furthermore, this condition will secure global existence of strong solutions \cite{mao2007stochastic}, which is unavoidable in case of \SH mode, and this is in contrast to the ``standard'' Lyapunov techniques in stochastic systems \cite{khasminskii201-stochastic}.
	The reason is that, in the latter, decay of the subject Lyapunov function is ensured for all times, whereas in the herein considered case, there are necessarily time intervals in which the said decay cannot be guaranteed.
	In Section \ref{sec:applications}, we will consider stochastic systems driven by bounded noise, which is physically meaningful, and where the stated conditions can be discarded.
%
%
%
%
\end{rem}

	


\begin{dfn}[Practical stochastic stabilization in mean]
	\label{dfn:pract-stab}
	A Markov policy $\mu$ is said to practically stochastically stabilize \eqref{eqn:sys} for any $\eps > 0$ any $0 < r < R$, there exists $\bar \delta > 0$ \sut for any \SH-mode \eqref{eqn:sys-SH} with $\delta \le \bar \delta$ the following holds:
	\begin{enumerate}
		\item the origin of the system is stable in probability;
		\item trajectories of the system $(r \lor \rho,R)$-converge in mean, where $\rho$ depends on the noise property $\bar \Sigma$.
	\end{enumerate}
\end{dfn}

\begin{rem}
	\label{rem:semi-glob}
	Definition \ref{dfn:stab-prob} bears a semi-global character: the required bound on the sampling time depends on the starting and target ball radii, although they are arbitrary.
\end{rem}

\begin{rem}
	\label{rem:rho}
	The reason for the presence of $\rho$ is due to $\bar \Sigma$, the noise-related term, in the decay condition \eqref{eqn:LF-alpha3}.
	In particular, it may be defined as $\rho = \inf_{\rho'}\{ \inf_{r' \ge  \sqrt{\tfrac{\bar \alpha_1}{2 \bar \alpha_2}}r \lor \rho'  } (\alpha_3(r') - \bar \Sigma) \ge \bar \alpha_3 \} $.
\end{rem}

\section{Main theorem} \label{sec:main}

\begin{thm}[Practical stochastic stabilization]
	\label{thm:stoch-stab}
	Consider a stochastic system \eqref{eqn:sys}.
	Suppose there exists a stochastic Lyapunov pair $(L, \mu)$.
	Then, $\mu$ practically stochastically stabilizes \eqref{eqn:sys} in mean.
\end{thm}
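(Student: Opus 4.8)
The plan is to transfer the continuous-time decay recorded in \eqref{eqn:LF-alpha3} to the sampled dynamics by controlling, on each sampling interval, the discrepancy between the sample-and-hold generator and the ideal generator $\A^\mu L$ evaluated at the preceding sample point. Since $L$ is only $\mathcal C^2$ away from the origin, I would first fix $0 < r < R$ and pass to the compact level-set shell $\mathcal S_{r,R} := \{x : \alpha_1(r) \le L(x) \le \alpha_2(R)\}$, which through \eqref{eqn:LF-alphas12} contains $\ball_R$, avoids a neighborhood of the origin, and is bounded. On $\mathcal S_{r,R}$ the maps $f, \sigma, \mu, \nabla L, \nabla^2 L$ are all bounded and uniformly continuous. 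Restricting the analysis to trajectories living in this shell (until they first enter the target level set) is what renders the result semi-global and is forced by the impossibility of controlling the noise-driven excursions uniformly near the origin.

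The core of the argument is a one-step Lyapunov estimate. Applying Dynkin's formula on $[k\delta, (k+1)\delta]$ gives
\begin{equation*}
\E{L(X_{(k+1)\delta}) \mid \mathcal F_{k\delta}} - L(X_k) = \E{\int_{k\delta}^{(k+1)\delta} \A^{\mu^\delta} L(X_s) \diff s \,\Big|\, \mathcal F_{k\delta}}.
\end{equation*}
Throughout the interval the control is frozen at $\mu(X_k)$, so the integrand $\A^{\mu^\delta} L(X_s)$ is exactly the generator at state $X_s$ with this frozen control, and it differs from $\A^\mu L(X_k)$ only through the state excursion $X_s - X_k$ (at the sample point the frozen control coincides with the Markov control). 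I would bound this excursion with a standard moment estimate: boundedness of $f, \sigma$ on $\mathcal S_{r,R}$ together with the It\=o isometry yields $\E{\nrm{X_s - X_k}^2 \mid \mathcal F_{k\delta}} = \bigo{\delta}$, hence $\E{\nrm{X_s - X_k} \mid \mathcal F_{k\delta}} = \bigo{\sqrt{\delta}}$ by Cauchy--Schwarz. Combining this with a (concave) modulus of continuity of the generator integrand in the state on $\mathcal S_{r,R}$, one obtains
\begin{equation*}
\E{\A^{\mu^\delta} L(X_s) \mid \mathcal F_{k\delta}} \le -\alpha_3(\nrm{X_k}) + \bar\Sigma + \omega(\delta), \quad \omega(\delta) \sea 0,
\end{equation*}
so that, integrating over the interval, the expected increment of $L$ over one step is at most $\delta\left(-\alpha_3(\nrm{X_k}) + \bar\Sigma + \omega(\delta)\right)$.

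From here I would run a discrete-time descent/supermartingale scheme. Fix $\bar\delta$ so that $\omega(\delta)$ is dominated by a fixed fraction of the decay margin for $\delta \le \bar\delta$, and define $\rho$ through $\alpha_3(\rho) \approx \bar\Sigma$; then the one-step increment is strictly negative whenever $X_k$ lies outside $\ball_\rho$, so $L(X_k)$ is a supermartingale until the trajectory reaches the target level set. Summing the one-step estimate and sandwiching $\nrm{X_t}$ between $\alpha_1^{-1}$ and $\alpha_2^{-1}$ through \eqref{eqn:LF-alphas12} yields the $(r,R)$-convergence in mean of Definition \ref{dfn:conv-mean}, together with the uniform reaching-time bound (the number of descent steps is controlled by $L(x_0)/(\delta \cdot \text{margin})$). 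Stability in probability in the sense of Definition \ref{dfn:stab-prob} follows from the same supermartingale inequality localized near the origin, via a Doob/Markov maximal-inequality estimate on the sub-level sets of $L$.

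The step I expect to be the main obstacle is the $\bigo{\sqrt{\delta}}$ size of the diffusion-driven excursion: it is of strictly larger order than the $\bigo{\delta}$ drift contribution and is the very mechanism that forces a positive floor $\rho > 0$ rather than convergence to the origin. Keeping $\omega(\delta)$ small relative to $\alpha_3(\nrm{X_k})$ is delicate near the boundary of the target ball, where $\alpha_3(\nrm{X_k})$ is itself small; this is precisely why the estimate must be carried out on the shell $\mathcal S_{r,R}$ and why only practical, rather than asymptotic, stabilization can be claimed. A secondary difficulty is converting the conditional one-step inequality into the unconditional $\limsup$ statement uniformly in the initial condition, which requires a stopping-time decomposition to account for the intervals on which the trajectory may momentarily exit the shell and hence escape the region where the generator bounds are valid.
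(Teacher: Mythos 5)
Your proposal reaches the theorem but by a genuinely different route than the paper, and the comparison is instructive. The paper works with the \emph{unconditional} mean throughout: it applies the It\=o rule to $L$ on a sampling interval, kills the It\=o integral by taking total expectation, invokes the mean value theorem on the time integral, and bounds $\A^{\mu^\delta}L(X_{t'})-\A^{\mu}L(X_k)$ via explicit Lipschitz constants ($\lip{\nabla L}{R^*}$, $\lip{f}{R^*}$, $\lip{\sigma}{R^*}$, $\lip{\nabla^2 L}{R^*}$), organizing the argument into two cases on $\E{L_k}$: outside the core level ($\E{L_k}\ge \tfrac{l^*}{2}$, where decay plus an escape-time/overshoot argument and Doob's inequality give stability in probability) and inside the target level ($\E{L_k}\le\tfrac{3l^*}{4}$, where the one-step continuity bound $\E{L_t-L_k}\le\lip{L}{R^*}\bar f\delta\le\tfrac{l^*}{4}$ makes the target set invariant in mean); this yields the explicit reaching-time bound \eqref{eqn:raching-time}. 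You instead run conditional Dynkin estimates and a discrete supermartingale descent. The most substantive divergence is the excursion bound, and here your version is the more defensible one: the paper estimates $\E{X_{t'}-X_k}\le\bar f\,(t'-k\delta)$, i.e.\ $\bigo{\delta}$, justified by the vanishing of the It\=o integral under the expectation --- but that argument controls only $\nrm{\E{X_{t'}-X_k}}$, whereas the Lipschitz bounds require $\E{\nrm{X_{t'}-X_k}}$, which, exactly as you say (It\=o isometry plus Cauchy--Schwarz), is $\bigo{\sqrt\delta}$. Your modulus-of-continuity correction $\omega(\delta)\sea 0$ absorbs this; the price is that the second-order term degrades from the paper's $\bigo{\delta^2}$ to $\bigo{\delta^{3/2}}$, which is harmless since only vanishing relative to the linear decay term is needed. (The paper's $\bigo{\delta^2}$ could be recovered legitimately by assuming $L\in\mathcal C^4$ and applying Dynkin to $\A^{u}L$ itself.)

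Two points in your sketch still need the paper's Case-2-type reasoning to be complete. First, the $\limsup$ statement of Definition \ref{dfn:conv-mean} does not follow from the descent alone: you must show that once the mean state enters $\ball_r$ it cannot be thrown back out between samples, which is exactly the bounded one-step increment argument with $\delta$ chosen so the per-step rise is at most a fixed fraction of $l^*$. Second, near the origin $L(X_k)$ is \emph{not} a supermartingale --- the decay condition \eqref{eqn:LF-alpha3} only gives $\A^\mu L\le\bar\Sigma>0$ there --- so your stability-in-probability step cannot rest on a supermartingale maximal inequality localized at the origin; use instead Markov's inequality on the uniformly bounded mean of $\nrm{X_t}$ (or $L$), which is in the spirit of the paper's Doob step and is what Definition \ref{dfn:stab-prob}, with its free constant $c$, actually requires.
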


\begin{proof}
	The proof is organized into two parts: first, necessary parameters are determined; second, a \SH-analysis is conducted on the mean Lyapunov function $L$.
	
	\textbf{Settings}.
	Let $0 < r < R$ be given.
	Let $X_t$ be an arbitrary $\mathcal F_t$-measurable random variable and denote $L_t := L(X_t)$ for brevity. 
	From \eqref{eqn:LF-alphas12}, and the Jensen's inequality, observe that
	\begin{equation}
		\label{eqn:alphas1-inverse-E}
		\E{\nrm{X_t}} \le \sqrt{ \nicefrac{1}{\bar \alpha_1} \E{L_t} }.
	\end{equation}	
	Let $\bar L := \sup_{x \in \ball_R} L(x)$ and $R^* := \sqrt{ \nicefrac{1}{\bar \alpha_1} \bar L} $.
	It follows that
	\begin{equation}
		\label{eqn:when-inside-Rstar}
		\E{L_t} \le \bar L \implies \E{\nrm{X_t}} \le R^*.
	\end{equation}
	Define $\rho := \inf_{\rho'}\{ \inf_{r' \ge  \sqrt{\tfrac{\bar \alpha_1}{2 \bar \alpha_2}}r \lor \rho'  } (\alpha_3(r') - \bar \Sigma) \ge \bar \alpha_3 \}$ for a given $\bar \alpha_3$ which is always possible since $\bar \Sigma$ is finite and $\alpha_3$ is $\K_\infty$.
	Denote $l^* := \bar \alpha_1 (r \lor \rho)^2$.
	From the assumption that $\alpha_3 \circ \sqrt{\nrm{x}}$ is convex, use the Jensen's inequality to deduce
	\begin{equation}
		\label{eqn:when-min-decay}
		\E{L_t} \ge \frac{l^*}{2} \implies \E{\alpha_3(\nrm{X_t})} \ge \bar \alpha_3.
	\end{equation}
	Without loss of generality, assume that the starting ball is bigger than $\ball_\rho$.
	The idea of the above settings is, we take a smaller level of the Lyapunov function, namely, $\nicefrac{l^*}{2}$ than the one we can relate to the target ball via $\E{L_t} \le l^* \implies \E{\nrm{x}} \le r$.
	Once the mean Lyapunov function reaches the level $l^*$, it will stay within it from there on.
	The mean Lyapunov function will have guaranteed decay everywhere beyond the level $\nicefrac{l^*}{2}$.
	Notice that due to the presence of $\bar \Sigma$ in \eqref{eqn:LF-alpha3}, the level $l^*$ may not be too small, for otherwise $\bar \alpha_3$ may fail to be negative.
	That is the reason to take $\ball_\rho$ containing $\{ x \in \R^n : \alpha_3(\nrm{x}) \le 2 \bar \Sigma \}$, in particular.
	Proceed now to the \SH analysis, during which necessary bounds on $\bar \delta$ will be determined. 
	
	\textbf{\SH analysis.} Now, proceed to apply the Markov policy $\mu$ in the \SH-mode \eqref{eqn:sys-SH} and check the sampled mean Lyapunov function values.
	Consider a time step $k$.
	
	\textit{Case 1}: $\E{L_{k\delta}} \ge \tfrac{l^*}{2}$.
	
	Due to the monotone condition, the trajectory $\{X_t\}_t$ of \eqref{eqn:sys-SH} exists on the entire $[k\delta, (k+1)\delta]$.
	Define $T_{\mathcal R} : = \inf_{t \ge k\delta } \{ \nrm{X_t} \ge \mathcal R \}$, $\bar T := \inf_{t \ge k\delta } \{ \E{L_t} \ge \bar L\}$, $t_{\mathcal R} : = T_{\mathcal R} \land \bar T \land t, t \in [k\delta, (k+1)\delta]$ and assume that $\E{L_{k \delta}} \le \bar L$.
	Applying the It\=o rule to $L_{t_{\mathcal R}}$, we have
	\begin{equation}
		\label{eqn:Lt-Ito}
		\begin{aligned}
			& L_{t_{\mathcal R}} = L_{k \delta} + \\
			& \int \limits_{k\delta}^{t_{\mathcal R}} \A^{\mu^\delta} L(X_{\tau}) \diff \tau + 
			\int \limits_{k\delta}^{t_{\mathcal R}} \nabla L(X_t) \sigma (X_\tau, \mu(X_{k \delta})) \diff B_\tau.
		\end{aligned}
	\end{equation}		
	Denote the first integral $I_1$ and the second, respectively, $I_2$. 
	Now, apply the stochastic mean value theorem to $I_1$ to yield, for some $T' \in [k\delta, t_{\mathcal R}]$:
		\begin{equation}
			\label{eqn:integr-AL-MVT}
			\begin{aligned}
				I_1 \le & \A^{\mu^\delta} L(X_{T'}) ( t \land T_{\mathcal R} - k\delta ).
			\end{aligned}
		\end{equation}
	Then, using that $L \in \mathcal C^2$, and the bound $\nrm{X_{t_{\mathcal R}} - X_{k \delta}} \le \bar f^{\mu^\delta}_{\mathcal R} (t_{\mathcal R} - k \delta)$, deduce
	\begin{equation}
		\label{eqn:AL-MVT-bounds}
		\begin{aligned}
			& \A^{\mu^\delta} L(X_{T'}) ( t_{\mathcal R} - k\delta ) \le - \bar \alpha_3 ( t_{\mathcal R} - k\delta ) + \\
			& ( t_{\mathcal R} - k\delta )^2 \sup_{\tau' \in [k \delta, t_{\mathcal R}]} \bigg( \bar f^{\mu^\delta}_{ \nrm{X_{\tau'}} } \Big( \lip{\nabla L}{ \nrm{X_{\tau'}} } \bar f^{\mu^\delta}_{ \nrm{X_{\tau'}} } \\
			& + b_{ \nrm{X_{\tau'}} } \lip{f}{ \nrm{X_{\tau'}} } + \bar \sigma_{ \nrm{X_{\tau'}} } b'_{ \nrm{X_{\tau'}} } \lip{\sigma}{ \nrm{X_{\tau'}} } + \\
			& \tfrac 1 2 (\bar \sigma^{\mu^\delta})^2 \lip{\nabla^2 L}{ \nrm{X_{\tau'}} } \Big) \bigg).
		\end{aligned}
	\end{equation}
	The second summand in the right-hand side of \eqref{eqn:AL-MVT-bounds} is $( t_{\mathcal R} - k\delta )^2 \sup_{\tau' \in [k \delta, t_{\mathcal R}]} \Gamma^{\mu^\delta}_{\nrm{X_{\tau'}}} L$.
	Notice that $\A^{\mu^\delta} L(X_{T'}) ( t_{\mathcal R} - k\delta ) \le 0$ whenever $-\bar \alpha_3 + \delta \sup_{\tau' \in [k \delta, t_{\mathcal R}]} \Gamma^{\mu^\delta}_{\nrm{X_{\tau'}}} L \le 0$ since $( t_{\mathcal R} - k\delta ) \le \delta$ \as
	Therefore, set $\bar \delta \le \tfrac{\bar \alpha_3 \bar \alpha_1}{2 \alpha_4 \left( \sqrt{\bar L} \right)}$.
	Furthermore, the global existence of solutions implies $T_{\mathcal R} \ra \infty$ as $\mathcal R \ra \infty$
	Thus, taking this limit, the Fatou's lemma and the Jensen's inequality yields
	\begin{equation}
		\label{eqn:Fatou-GammaR}
		\begin{aligned}
			& \E{\lim_{\mathcal R \ra \infty} \sup_{\tau' \in [k \delta, t_{\mathcal R}]} \Gamma^{\mu^\delta}_{\nrm{X_{\tau'}}} L} \le \tfrac{1}{\bar \alpha_1} \alpha_4 \left(\sqrt{\bar L} \right). 
		\end{aligned}
	\end{equation}	
	Notice that $\E{I_2} = 0$ due to the martingale property of the It\=o integral (up to $t_{\mathcal R}$ with fixed $\mathcal R$, cf. \cite{Deng2001-stochastic-stab-noise}).
	Now, applying the expected value operator to \eqref{eqn:Lt-Ito} yields 
	\begin{equation}
		\label{eqn:Lt-decay}
		\begin{aligned}
			\E{L_{t \land \bar T}} - \E{L_{k\delta}} \le & - \frac{\bar \alpha_3}{2} ( t \land \bar T - k\delta ).
		\end{aligned}
	\end{equation}		
	Since this holds for any $t$ up to $\bar T$, we have $\E{L_{t \land \bar T}} \le \bar L$, whence $\bar T$ has to be not less than $(k+1) \delta$ (cf. \cite{Clarke1997-stabilization}).
	Recalling that $\E{L_t} \le \bar L$ implies $\E{\nrm{X_t}} \le R^*$, boundedness of the trajectory in mean by $R^*$ on $[k\delta, (k+1)\delta]$ follows.		
	
	\textit{Case 2}: $\E{L_{k \delta}} \le \tfrac{3 l^*}{4}$.
	
	First, evidently, $\E{L_{k \delta}} \le \tfrac{3 l^*}{4}$ implies $\E{\nrm{X_{k \delta}}} \le r \lor \rho$.
	Notice that $\E{\nrm{X}} \le \nicefrac{1}{\bar \alpha_1} L_t$.
	From the Grönwall's inequality \cite{mao2007stochastic}, for $t \in [k \delta, (k+1)\delta]$,
	\begin{equation}
		\label{eqn:X-Gronwall}
		\E{\nrm{X_t}^2} \le \sqrt{1 + \E{\nrm{X_{k \delta}}}^2} e^{2 K \delta}
	\end{equation}
	Then,
	\begin{equation}
		\label{eqn:X-Gronwall-to-L}
		\begin{aligned}
			\E{L_t} & \le E{\nrm{X_t}^2} \le \sqrt{1 + \E{\nrm{X_{k \delta}}}^2} e^{2 K \delta} \\
			& \le \sqrt{1 + \nicefrac{1}{\bar \alpha_1^2}\E{L_{k\delta}}^2} e^{2 K \delta}.
		\end{aligned}
	\end{equation}	
	Using the fact that $\E{L_{k \delta}} \le \tfrac{3 l^*}{4}$, one can deduce a second bound on $\bar \delta$ so that, if $\delta \le \bar \delta$, then $\E{\abs{L_t - L_{k\delta}}} \le \tfrac{l^*}{4} \delta$.	
	Thus, once $\E{L_{k \delta}} \le \tfrac{3 l^*}{4}$, $\E{L_{k \delta}} \le l^*$ for all subsequent $k$.
	Combining this with the claim from the previous case, we conclude that the trajectory exists for any $t \ge 0$ and satisfies $\E{\nrm{X_t}} \le R^*$.
	Notice that switching of the control at the nodes $k \delta$ poses no problem since $f^\mu, \sigma^\mu$ need only be measurable in $t$ \cite{oksendal2003stochastic}.
	
	Now, observe that since $\E{L_t} \le \bar L, \forall t \ge 0$, $\E{\nrm{X_t}^2} \le \nicefrac{1}{\bar \alpha_1} \bar L$.
	Combining this with the fact that $\E{\nrm{X_t}}^2 \le {R^*}^2$, deduce that $\Var{\nrm{X_t}}$ is bounded by $s^2:=\abs{\nicefrac{1}{\bar \alpha_1} \bar L - {R^*}^2}$.
	Then, observe that $\forall \eps>0 \spc \nrm{X_t} \ge R^* + \eps s \implies \nrm{X_t} \ge \E{\nrm{X_t}} + \eps s$ since $\forall t \ge 0 \spc \E{\nrm{X_t}} \le R^*$.
	Therefore, $\PP{\nrm{X_t} \ge R^* + \eps s} \le \PP{\nrm{X_t} \ge \E{\nrm{X_t}} + \eps s}$.
	By the one-tailed Chebyshev's inequality, $\PP{\nrm{X_t} \ge \E{\nrm{X_t}} + \eps s} \le \nicefrac{1}{1+\eps^2}$.
	So, $\PP{\nrm{X_t} \le R^* + \eps s} \ge \nicefrac{\eps^2}{1+\eps^2}$.
	Matching this with Definition \ref{dfn:stab-prob}, set, $\forall \eps >0$, $A :\equiv R^*, c :\equiv s$ and $\kappa(\eps) := \nicefrac{\eps^2}{1+\eps^2}$. 
	To conclude, we have $(r \lor \rho,R)$-convergence of the trajectory in mean.
	Derive the bound on the reaching time independent of $\delta$ as follows:
	\begin{equation}
		\label{eqn:raching-time}
		T_{(r,R)} \le \tfrac{4 \bar L - 3 l^*}{2 \bar \alpha_3}.
	\end{equation}	
\end{proof}

%

\begin{crl}
	\label{crl:determ2stoch}
	Suppose that \eqref{eqn:LF-alpha3} have the form
	\begin{equation}
		\label{eqn:smooth-CLF-decay-noiseless}
		\forall x \spc \nabla L(x) f(x,\mu(x)) \le - \alpha_3(\nrm{x}).
	\end{equation}
	In other words, $(L, \mu)$ is a Lyapunov pair for the noiseless system $\dot x = f(x, u)$.
	If it holds that
	\begin{equation}
		\label{eqn:sigma2nabla2-growth}
		\begin{aligned}
			& \exists \tilde r > 0 \spc \forall x \notin \ball_{\tilde r} \\
			& \alpha_3(\nrm{x}) \ge \tfrac 1 2 \nrm{\sigma^\top(x,\mu(x)) \nabla^2 L(x) \sigma(x,\mu(x))}
		\end{aligned}
	\end{equation}
	then, $\mu$ practically stochastically stabilizes \eqref{eqn:sys} in mean.
	In particular, \eqref{eqn:sigma2nabla2-growth} holds if $\nrm{\sigma}$ is uniformly bounded and $\nrm{\nabla^2 L(x)}$ has a growth rate lower than that of $\alpha_3$ everywhere except for a vicinity of the origin.
\end{crl}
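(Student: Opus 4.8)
The plan is to reduce the corollary to Theorem~\ref{thm:stoch-stab} by showing that, under the stated hypotheses, $(L,\mu)$ is in fact a stochastic Lyapunov pair in the sense of Definition~\ref{dfn:stoch-L-pair}. Because $(L,\mu)$ is assumed to be a Lyapunov pair for the noiseless system, the regularity requirements ($\mu$ continuous and $L\in\mathcal C^2$ on $\R^n\sm\{0\}$) and the positive-definiteness \eqref{eqn:LF-alphas12} transfer verbatim; only the stochastic decay \eqref{eqn:LF-alpha3} remains to be produced, after which Theorem~\ref{thm:stoch-stab} delivers practical stochastic stabilization in mean, with the floor $\rho$ inherited from the construction and hence depending on $\sigma$.

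The computation I would carry out starts from the generator \eqref{eqn:stoch-generator}, split as $\A^\mu L(x)=\nabla L(x)f(x,\mu(x))+S(x)$ with the second-order part $S(x):=\tfrac12\tr{\sigma^\top(x,\mu(x))\nabla^2L(x)\sigma(x,\mu(x))}$. The deterministic decay \eqref{eqn:smooth-CLF-decay-noiseless} bounds the first summand by $-\alpha_3(\nrm x)$, so $\A^\mu L(x)\le-\alpha_3(\nrm x)+S(x)$. A routine linear-algebra estimate $\tr M\le\nrm M$ (up to a fixed dimensional factor) passes from the trace to the matrix norm in \eqref{eqn:sigma2nabla2-growth}, so that $S(x)\le\alpha_3(\nrm x)$ for all $x\notin\ball_{\tilde r}$. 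The remaining, and decisive, step is to turn this pointwise matrix bound into a radial $\K_\infty$ decay with a strictly positive margin. I would set $\bar\Sigma:=\sup\{S(x):x\in\ball_{\tilde r}\}$ (finite once the origin, where $L$ need not be $\mathcal C^2$, is excised and the supremum taken over a compact annulus, the excised part being absorbed into $\bar\Sigma$) and then exhibit $\hat\alpha_3\in\K_\infty$ with $S(x)\le\alpha_3(\nrm x)-\hat\alpha_3(\nrm x)+\bar\Sigma$ for every $x$; the choice $\hat\alpha_3:=\tfrac12\alpha_3$ works, since the required $S(x)\le\tfrac12\alpha_3(\nrm x)+\bar\Sigma$ holds inside $\ball_{\tilde r}$ by the definition of $\bar\Sigma$ and outside it by the sufficient growth comparison ($\nrm\sigma$ uniformly bounded and $\nrm{\nabla^2L}$ of strictly lower growth order than $\alpha_3$), under which $S(x)=o(\alpha_3(\nrm x))$ and hence $S(x)\le\tfrac12\alpha_3(\nrm x)$ for all large $\nrm x$. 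Substituting gives $\A^\mu L(x)\le-\tfrac12\alpha_3(\nrm x)+\bar\Sigma$, i.e. \eqref{eqn:LF-alpha3} with the pair $(\hat\alpha_3,\bar\Sigma)$, and invoking Theorem~\ref{thm:stoch-stab} finishes the argument; the final ``in particular'' claim is then just the one-line observation that the same growth comparison implies \eqref{eqn:sigma2nabla2-growth}.

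I expect the main obstacle to be exactly this conversion of $S(x)$ into a radial bound. Since $S$ depends on $x$ through both $\nabla^2L$ and $\sigma$ and is not a function of $\nrm x$ alone, dominating it by a $\K_\infty$ function of $\nrm x$ while keeping a genuine decay margin requires a strict gap between the growth of $\alpha_3$ and that of the Hessian--diffusion product: the bare domination $\tfrac12\nrm{\sigma^\top\nabla^2L\sigma}\le\alpha_3(\nrm x)$ only yields the non-strict bound $\A^\mu L(x)\le0$ outside $\ball_{\tilde r}$, which makes the reaching-time estimate \eqref{eqn:raching-time} degenerate, whereas the strict growth comparison supplies the $\tfrac12\alpha_3$ margin that Theorem~\ref{thm:stoch-stab} needs. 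Controlling the blow-up of $\nabla^2L$ near the origin when defining $\bar\Sigma$ is a secondary technicality, handled by the same core-ball device as in the main proof.
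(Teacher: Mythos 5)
Your proposal is correct in outline and follows the same reduction as the paper: split the generator into the deterministic part, bounded by $-\alpha_3(\nrm{x})$ via \eqref{eqn:smooth-CLF-decay-noiseless}, and the second-order part $S(x) := \tfrac 1 2 \tr{\sigma^\top(x,\mu(x)) \nabla^2 L(x) \sigma(x,\mu(x))}$; set $\bar\Sigma$ to a supremum of the second-order term over the small ball $\ball_{\tilde r}$; conclude a decay condition of the form \eqref{eqn:LF-alpha3}; and invoke Theorem \ref{thm:stoch-stab}. Where you genuinely diverge is the construction of the modified decay rate, and your version is the more careful one. The paper defines $\hat\alpha_3 := \alpha_3(\nrm{x}) - S(x)$ pointwise and then states the (by that definition trivial) inequality $-\alpha_3(\nrm{x}) + S(x) \le -\hat\alpha_3 + \bar\Sigma$; it never verifies that $\hat\alpha_3$ is a function of $\nrm{x}$ alone, let alone of class $\K_\infty$ — and under the bare domination \eqref{eqn:sigma2nabla2-growth} it need not be, since it can vanish identically outside $\ball_{\tilde r}$ when equality holds there. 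You instead keep a genuine $\K_\infty$ function, $\hat\alpha_3 := \tfrac 1 2 \alpha_3$, and pay for it by invoking the strict growth comparison of the ``in particular'' clause (giving $S(x) = o(\alpha_3(\nrm{x}))$), because, as you correctly observe, the bare domination yields only $\A^\mu L(x) \le 0$ outside $\ball_{\tilde r}$, i.e.\ zero margin. So your route repairs exactly the point where the paper is loose, at the cost of proving the ``in particular'' variant rather than the corollary under \eqref{eqn:sigma2nabla2-growth} alone — which, by your own remark, is arguably not obtainable as stated. Two bookkeeping points remain on your side: (i) the $o(\cdot)$ comparison gives $S(x) \le \tfrac 1 2 \alpha_3(\nrm{x})$ only for $\nrm{x} \ge R_0$ with possibly $R_0 > \tilde r$, so $\bar\Sigma$ must additionally dominate $S(x) - \tfrac 1 2 \alpha_3(\nrm{x})$ on the annulus $\tilde r \le \nrm{x} \le R_0$ (equivalently, replace $\tilde r$ by $R_0$ in its definition); (ii) passing from the norm in \eqref{eqn:sigma2nabla2-growth} to the trace in $S$ costs a dimensional factor, so the bare domination gives $S(x) \le n\,\alpha_3(\nrm{x})$, not $S(x) \le \alpha_3(\nrm{x})$ — harmless for your argument, which rests on the growth comparison rather than on \eqref{eqn:sigma2nabla2-growth}, and a looseness you share with the paper's own proof.
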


\begin{proof}
	Set
	\[
		\bar \Sigma := \tfrac 1 2 \sup_{x \in \ball_{\tilde r}} \nrm{\sigma^\top(x,\mu(x)) \nabla^2 L(x) \sigma(x,\mu(x))}.
	\]
	Define, for all $x$,
	\[
		\hat \alpha_3(\nrm{x}) \hspace{-3pt} := \hspace{-2pt} \alpha_3(\nrm{x}) \hspace{-2pt} - \hspace{-2pt} \tfrac 1 2 \tr{\sigma^\top \hspace{-2pt} (x,\mu(x)) \nabla^2 L(x) \sigma(x,\mu(x))}.
	\]
	Then, for all $x$, it holds that
	\begin{equation*}
		\begin{aligned}
			& - \alpha_3(\nrm{x}) \hspace{-2pt} + \hspace{-2pt} \tfrac 1 2 \tr{\sigma^\top(x,\mu(x)) \nabla^2 L(x) \sigma(x,\mu(x))} \le \\
			& - \hat \alpha_3(\nrm{x}) + \bar \Sigma
		\end{aligned}
	\end{equation*}
	which ensures a decay condition of the kind \eqref{eqn:LF-alpha3} with the generator $\A$ involved.
\end{proof}

\begin{rem}
	The growth condition \eqref{eqn:sigma2nabla2-growth} in Corollary \ref{crl:determ2stoch} may be justified as follows.
	Roughly speaking, taking derivatives decreases the growth rate.
	That is, one would normally expect that, outside some vicinity of the origin, $\nrm{\nabla^2 L(x)}$ grows slower than $\nrm{\nabla L(x)}$.
	Such is the case when $L$ is \eg polynomial.
	The diffusion function $\sigma$ describes the noise magnification depending on the state and control action.
	It may be justified in some applications to assume this term to be bounded uniformly in $x, u$.
	All in all, Corollary \ref{crl:determ2stoch} gives a particular hint on transferring a Lyapunov pair from a nominal, noiseless, to a noisy system.
\end{rem}


\section{Generalizations and applications}\label{sec:applications}

The central result in Section \ref{sec:main} applies to stochastic dynamical systems provided with a Lyapunov pair $(L,\mu)$ describing a Lyapunov function $L$ for the closed-loop under a Markov policy $\mu$.
There is no straightforward extension of Theorem \ref{thm:stoch-stab} to the case of a general control Lyapunov function (CLF) which may have a decay condition of the kind
\begin{equation}
	\label{eqn:smooth-CLF-decay}
	\begin{aligned}
		& \forall \text{ compact } \X \spc \exists \text{ compact } \U_\X \\
		& \forall x \in \X \spc \inf_{u \in \U_\X} \A^u L(x) \le - \alpha_3(\nrm{x}) + \bar \Sigma.	
	\end{aligned}
\end{equation}
The reason is that there is no uniform bound on the realizations of $\{W_t\}_t$.
Consequently, there does not exist a $\bar x > 0$ \sut $\forall t \ge 0 \spc \{X_t\}_t \le \bar x$ \as
What it means practically is that there is no way to determine a uniform bound on the control actions.
It should be noted here that if the CLF is smooth, $\mu$ may be computed analytically by a universal formula \cite{Sontag1989-formula,Florchinger1994-Lyapunov-like-stochastic-stability}, with a variant yielding bounded controls \cite{Lin1991-stabilization-bounded-controls}.
However, most CLFs are actually non-smooth due to a general failure of existence of continuous feedback laws \cite{Clarke2011-discont-stabilization}.
A non-smooth CLF may have a decay condition of the kind
\begin{equation}
	\label{eqn:nonsmooth-CLF-decay}
	\begin{aligned}
		& \forall \text{ compact } \X \spc \exists \text{ compact } \U_\X \\
		& \forall x \in \X \spc \inf_{\nu \in \co(f(x, \U_\X))} \D_\nu L(x) \le - \alpha_3(\nrm{x}) + \bar \Sigma,	
	\end{aligned}
\end{equation}
where $\D_\nu$ is a suitable generalized directional differential operator \eg directional subderivative \cite{Clarke2008-nonsmooth-analys}.
In this case, a closed-form expression for a Markov policy $\mu$ may not be possible altogether.
Therefore, the case of a general CLF is worth addressing.
One immediate requirement would be to assert that $\U_\X \equiv \U, \U$ compact for any $\X$.
But there is still a set of relatively strong assumptions (yet not extraordinary in the analyses and stabilization of nonlinear stochastic systems) in Definition \ref{dfn:stoch-L-pair}.
The key problem may be intuitively explained as follows.
When the Markov policy is sampled, there is no a priori bound on the mean Lyapunov function between the nodes.
Another problem is that the mean Lyapunov function and the mean state might be unrelated due to an arbitrarily large Jensen's gap.
A way to remedy all these problems is to consider noise models different to the Brownian motion, specifically, bounded ones \ie those whose realizations are bounded \as
Such models are known, and for a good review, the reader may refer to \cite{Domingo2020-bounded-stochastic-processes}.
We briefly mention these for the sake of completeness.
The easiest way to achieve bounded noise is to apply a saturation function to $W_t$.
Such is the case of the sine-Wiener process $Z_t = \sin \left ( \sqrt{\tfrac {2} {\tau_a}} W_t\right)$ with an autocorrelation time parameter $\tau_a$.
Substituting $W_t$ in the system SDE \eqref{eqn:sys} for such a noise would require extra attention to the existence and uniqueness of strong solutions, though.
Another way is to augment the system description with a dynamical noise model.
Thus, the overall description would read:
\begin{equation}
	\label{eqn:sys-bounded-noise}
	\begin{aligned}
		& \diff X_t = f(X_t, U_t) \diff t + \sigma (X_t, U_t) Z_t \diff t, \PP{X_0 = x_0} = 1, \\
		& \diff Z_t = \zeta(Z_t) \diff t + \eta(Z_t) \diff W_t,	
	\end{aligned}
\end{equation}
where $\{Z_t\}$ is the noise process with an internal model (SDE) described by the drift function $\zeta$ and diffusion function $\eta$.
Particular ways to construct such an SDE include the following \cite{Domingo2020-bounded-stochastic-processes}:
\begin{itemize}
	\item The Doering-Cai-Lin (DCL) noise \\
	\begin{equation}
	\label{eqn:DCL}
		\diff Z_t = - \tfrac 1 \theta Z_t \diff t + \sqrt{\tfrac {1 - Z_t^2 }{\theta (\gamma + 1 )}} \diff W_t,
	\end{equation}
	with parameters $\gamma>-1, \theta>0$;
	\item The Tsallis-Stariolo-Borland (TSB) noise \\
	\begin{equation}
		\label{eqn:TSB}
		\diff Z_t = - \tfrac 1 \theta \tfrac{Z_t}{1 - Z_t^2} \diff t + \sqrt{\tfrac{1 - q}{\theta}} \diff W_t,
	\end{equation}
	with $\theta > 0, \spc q < 1$ parameters;
	\item Kessler–S{\o}rensen (KS) noise \\
	\begin{equation}
		\label{eqn:KS}
		\diff Z_t = - \tfrac {\vartheta} {\pi \theta} \tan \left ( {\tfrac \pi 2 Z_t} \right ) \diff t + \tfrac {2} {\pi \sqrt{\theta (\gamma + 1 )}}  \diff W_t,
	\end{equation}   
	with $\theta > 0, \gamma \ge 0, \vartheta = \tfrac{2\gamma+1}{\gamma+1}$ parameters. 	
\end{itemize}
The above models essentially design the drift and/or diffusion functions so as to confine strong solutions to stay within $(-1,1)$ (component-wise) \as (the unitary bound is chosen for simplicity and may be adjusted according to the application).
It should be noted that the corresponding functions $\zeta, \eta$ do not satisfy Lipschitz conditions in the standard way.
Nevertheless, existence and uniqueness of strong solutions can be ensured \cite{Domingo2020-bounded-stochastic-processes}.
So, for instance, in the case of the TSB noise, the drift is at least locally Lipschitz.
This fact, together with non-reachability of the boundaries $-1,1$ (which can be shown) resolves the strong solution question.

Summarizing, we can generalize Theorem \ref{thm:stoch-stab} to systems of the kind \eqref{eqn:sys-bounded-noise} with bounded noise, and general CLFs without a closed-form Markov policy $\mu$.
The proof would go along the lines of \cite{Clarke1997-stabilization,Osinenko2018-pract-stabilization,Osinenko2019-rob-pract-stab} and require only technical modifications.
Some aspects would get simpler compared to Theorem \ref{thm:stoch-stab} as there are now only Lebesgue integrals (the It\=o integral would be ``confined'' into the inner noise model).
Subsequently, provided the decay condition on the mean Lyapunov function, the trajectories would be bounded \as (since the driving noise is now bounded \as).
The mean trajectory bound within a time interval would read $\E{X_t - X_{k \delta}} \le \bar f \delta + \bar \sigma \delta$ instead of $\E{X_t - X_{k \delta}} \le \bar f \delta$ etc.
If the CLF were non-smooth, we would need to determine control actions in a suitable way.
This can be done by various methods, such as steepest descent, Dini aiming, or inf-convolutions \cite{Braun2017-SH-stabilization-Dini-aim}.
All these methods require performing optimization at each time step.
In fact, a generalization of Theorem \ref{thm:stoch-stab} for systems of the kind \eqref{eqn:sys-bounded-noise} applies also in the case when the said optimization is non-exact \cite{Osinenko2018-pract-stabilization,Osinenko2019-rob-pract-stab}.
A comment should be made about the generator $\A$.
When dealing with systems of the kind \eqref{eqn:sys-bounded-noise}, the required CLF needs not to account for the diffusion function $\sigma$ to achieve stabilization in the sense of Definition \ref{dfn:conv-mean}, which already entails a best possible bound that the mean trajectory converge into.
We may start with a noiseless system $\dot x = f(x, u)$ and derive a CLF for it first.
Then, we may apply \ref{crl:determ2stoch} if the CLF is smooth.
If it is not, the generator $\A$ is not well-defined.
For the \SH analysis, remarkably, this does not pose such serious difficulties, as one might think.
Instead of the condition of the kind \eqref{eqn:AL-MVT-bounds}, which requires smoothness, one can work with a variant of Taylor series for non-smooth functions.
So, for instance, in the case of inf-convolution control, such a condition reads:
\begin{equation}
	\label{eqn:nonsmooth-Taylor}
	\begin{aligned}
		& \forall \eps > 0, x, y \in \R^n \\
		& L_\beta(x + \eps y) \le L_{\beta}(x) + \eps \langle \nu_\beta(x) , y \rangle + \tfrac{\eps^2 \|y\|^2}{2\beta^2},
	\end{aligned}
\end{equation}
where $v_\beta(x), \beta \in (0,1)$ is a proximal subgradient defined via:
\begin{align}
	\label{eqn:subgrad-beta}
	& v_\beta(x) := \tfrac{x - y_\beta(x)}{\beta^2}, \\
	\label{eqn:inf-conv-argmin}
	& y_\beta(x) := \argmin_{y \in \R^n} \left( L(y) + \tfrac{1}{2\beta^2} \| y-x \|^2 \right),
\end{align}
and $L_\beta$ is the inf-convolution of $L$, used to approximate $L$ as follows:
\begin{equation}
	\label{eqn:inf-conv}
	L_\beta(x) := \min_{y \in \R^n} \left( L(y) + \tfrac{1}{2\beta^2} \| y-x \|^2 \right).	
\end{equation}
An important property of $v_\beta(x)$ is that it also happens to be a proximal subgradient of $L$ itself.
Using this along with \eqref{eqn:nonsmooth-CLF-decay}, and the fact that, for any proximal subgradient $v$ of $L$ at $x \in \R^n$ and $y \in \R^n$, it holds that
\begin{equation}
	\label{eqn:subdiff-and-Dini}
	\langle v, y \rangle \le \D_v L(x),
\end{equation} 
allows one to determine practically stabilizing controls actions via
\begin{equation}
	\label{eqn:inf-conv-ctrl}
	\mu(x) := \argmin_{u \in \U^\beta_\X} \langle v_\beta(x), f(v_\beta(x), u) \rangle,	
\end{equation}
where $\U^\beta_\X$ is compact and depends only on $\X, \beta$.
In the respective \SH-analysis, the essential fragment is exactly the treatment of the kind of a Taylor series \eqref{eqn:nonsmooth-Taylor}, expressed for the actual system dynamics \eqref{eqn:sys-bounded-noise}.
So, for any $t \in [k \delta, (k+1)\delta], k \in \N_{+0}, \Delta t := t - k\delta$, one has
\begin{equation}
	\label{eqn:nonsmooth-Taylor-actual}
	\begin{aligned}
		& \E{L_t - L_{k \delta}} \le \E{\langle v_\beta(X_{k \delta}) , F_k \rangle + \tfrac{\|F_k\|^2}{2 \beta^2}},
	\end{aligned}
\end{equation}
where
\begin{equation}
	\label{eqn:rhs-sampled}
	F_k := \underbrace{\int \limits_{k \delta}^{t} f(X_\tau, \mu(X_{k \delta})) \diff\tau}_{=:\Phi_{k \delta, t}} + \underbrace{\int \limits_{k \delta}^{t} \sigma(X_\tau, \mu(X_{k \delta}) Z_\tau \diff\tau}_{=:\Sigma_{k \delta, t}}.
\end{equation}
Re-expressing $\Phi_{k \delta, t}$ as
\begin{equation}
	\label{eqn:Fk-pivot}
	\footnotesize{	
	\Delta t f(X_{k \delta}, \mu(X_{k \delta})) + \underbrace{\int \limits_{k \delta}^{t} \left( f(X_\tau, \mu(X_{k \delta})) - f(X_{k \delta}, \mu(X_{k \delta})) \right) \diff\tau}_{=: A_{k\delta, t}}
	}	
\end{equation}
converts \eqref{eqn:nonsmooth-Taylor-actual} into
\begin{equation}
	\label{eqn:nonsmooth-Taylor-decay-term}
	\begin{aligned}	
		\E{L_t - L_{k \delta}} \le & \Delta t \E{\langle v_\beta(X_{k \delta}) , f(X_{k \delta}, \mu(X_{k \delta})) \rangle} \\
		& + \E{\langle v_\beta(X_{k \delta}), A_{k\delta, t} \rangle }\\
		& + \E{\langle v_\beta(X_{k \delta}) , \Sigma_{k \delta, t} \rangle} + \E{\tfrac{ \|F_k\|^2}{2 \beta^2}} \\
	\end{aligned}
\end{equation}
By the Cauchy-Schwarz inequality,
\begin{equation}
	\label{eqn:bound_scal_v_Phi}
	\E{\langle v_\beta(X_{k \delta}) , A_{k \delta, t} \rangle} \le \sqrt{\E{ \nrm{ v_\beta(X_{k \delta}) }^2 }} \sqrt{\E{ \nrm{A_{k \delta, t}}^2 }}.
\end{equation}
The term $\E{ \nrm{ v_\beta(X_{k \delta}) }^2 }$ can be bounded exploiting the fact that the driving noise is bounded and so $X_{k \delta}$ is bounded \as (in other words, interpreting $R^*$ as an \as overshoot bound), although the respective bound will depend on $\beta$.
Next, observe:
\[
	\nrm{A_{k \delta, t}} \le \bigo{\Delta t^2}.
\]
Here, either one can exploit an \as overshoot bound $R^*$, or exploit a growth assumption like $\nrm{f(x, \mu(x))} \le (a_0 + \nrm{x})^{a_1}, a_{0,1}>0$, or just $\nrm{f(x, \mu(x))} \le \nrm{x}^a, a>0$ if $f(0, 0)=0 \land \mu(0) = 0$.
Observe that $\E{\nrm{X}^a}$ is \as bounded since $\nrm{X}$ is \as bounded by the virtue of the noise model.
However, $\E{\nrm{X}^a}$ might not be related to $\E{\nrm{X}}$ in a straightforward way.
In any case, we have:
\[
	\E{\langle v_\beta(X_{k \delta}) , A_{k \delta, t} \rangle} \le \bigo{\Delta t^2}.
\]
The following bound is due to the Cauchy-Schwarz inequality: 
\[
	\E{ \langle v_\beta(X_{k \delta}) , \Sigma_{k \delta, t} \rangle} \le \sqrt{\E{ \nrm{ v_\beta(X_{k \delta}) }^2 }} \sqrt{\E{ \nrm{\Sigma_{k \delta, t}}^2 }}.
\]
The first factor can be bounded the same way as in the case of \eqref{eqn:bound_scal_v_Phi}.
Further, we have a bound (assuming the noise is at most $1$ in norm):
\[
	\nrm{\Sigma_{k \delta, t}} \le \Delta t \sup_{\tau \in [k \delta, t]} \nrm{ \sigma(X_\tau, \mu(X_\tau)) }.
\]
Finally and combined with the previous bounds, getting the correct pivot point $v_{\beta}(X_{k \delta})$, we may use the similar token as in \cite{Osinenko2018-pract-stabilization}:
\begin{equation}
	\label{eqn:nonsmooth-Taylor-decay-term-corr}
	\begin{aligned}	
		\E{L_t - L_{k \delta}} \le & \Delta t \E{\langle v_{\beta}(X_{k \delta}) , f(v_\beta(X_{k \delta}), \mu(X_{k \delta})) \rangle} \\
		& + C_1 \beta \Delta t + C_2 \frac{\Delta t^2}{\beta} + C_3 \frac{\Delta t}{\beta} + C_4 \frac{\Delta t^2}{2 \beta^2}, \\
	\end{aligned}
\end{equation}
where $C_1, C_2, C_3, C_4>0$ are constants.
Observe $- \E{\alpha_3(X_{k \delta})} \le - \alpha_3\left(\E{\nrm{X_{k \delta}}}\right)$ by the Jensen's inequality if $\alpha_3$ is convex.
So, $\Delta t \E{\langle v_{\beta}(X_{k \delta}) , f(v_\beta(X_{k \delta}), \mu(X_{k \delta})) \rangle}$ yields a suitable decay term.
Similarly to Corollary \ref{crl:determ2stoch}, either we have to assume uniform boundedness of $\sigma$, or that it possess a growth rate in $\nrm{x}$ not dominating over $\alpha_3$ anywhere except for vicinity of the origin.
The latter may be affected by $\beta$ due to the effect of combination of the terms $C_1 \beta \Delta t, C_3 \frac{\Delta t}{\beta}$ in \eqref{eqn:nonsmooth-Taylor-decay-term-corr}.
Now consider:
\begin{thm}
	\label{thm:stoch-stab-nonsmooth}
	Consider a stochastic system
	\begin{equation*}
		\begin{aligned}
			& \diff X_t = f(X_t, U_t) \diff t + \sigma (X_t, U_t) Z_t \diff t, \PP{X_0 = x_0} = 1, \\
			& \diff Z_t = \zeta(Z_t) \diff t + \eta(Z_t) \diff W_t	
		\end{aligned}
	\end{equation*}
	with bounded noise $Z_t$.
	Suppose there exists a (in general non-smooth) CLF $L$ with a decay condition
	\begin{equation*}
	\begin{aligned}
		& \forall \text{ compact } \X \spc \exists \text{ compact } \U_\X \\
		& \forall x \in \X \spc \inf_{\nu \in \co(f(x, \U_\X))} \D_\nu L(x) \le - \alpha_3(\nrm{x}).	
	\end{aligned}
	\end{equation*}	
	If $\sigma$ is uniformly bounded or does not dominate $\alpha_3$ in growth anywhere except for vicinity of the origin, there exists a map $\mu: \X \ra \U_\X$ that, applied in \SH mode, practically stochastically stabilizes the system in mean with $\rho$ (see Definition \ref{dfn:pract-stab}) depending on noise properties and $\beta$ of \eqref{eqn:subgrad-beta}.
\end{thm}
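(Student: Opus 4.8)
The plan is to follow the two-stage architecture of the proof of Theorem~\ref{thm:stoch-stab} --- a \emph{Settings} stage fixing all constants, then a \SH-analysis split into an ``outside the core ball'' case and an ``inside the target ball'' case --- while substituting three ingredients dictated by the new hypotheses. First, because the driving noise $\{Z_t\}$ is bounded a.\,s., say $\nrm{Z_t} \le \bar Z$, the augmented dynamics \eqref{eqn:sys-bounded-noise} contain only Lebesgue integrals; hence the mean increment over a sampling interval obeys $\E{\nrm{X_t - X_k}} \le (\bar f + \bar \sigma \bar Z)\delta$ directly, and trajectories stay a.\,s.\ bounded, so I can dispense with the martingale/escape-time machinery of Theorem~\ref{thm:stoch-stab} and instead confine trajectories to an overshoot ball $\ball_{R^*}$ path-wise. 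Second, since $L$ is only a non-smooth CLF, I would work throughout with its inf-convolution $L_\beta$ of \eqref{eqn:inf-conv} and the proximal subgradient $v_\beta$ of \eqref{eqn:subgrad-beta}, in place of $\nabla L$ and the generator $\A^\mu L$. Third, the policy is not given but must be produced: I fix $\beta \in (0,1)$ and define $\mu$ by the inf-convolution formula \eqref{eqn:inf-conv-ctrl}, which by construction takes values in a compact set contained in $\U_\X$.

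First I would reproduce the parameter block of Theorem~\ref{thm:stoch-stab} almost verbatim: for given $0 < r < R$ define $\bar L$, the overshoot radius $R^*$, the control and dynamics bounds $\bar u, \bar f, \bar \sigma$ on the compact $\ball_{R^*} \times \U_{R^*}$, the core level $l^* = \alpha_1(r)$ and core radius $r^*$, together with a minimal decay rate $\bar \alpha_3$ formed as in \eqref{eqn:alpha3bar} --- but now \emph{from the noiseless} rate $\alpha_3$, since the hypothesised decay condition carries no $\bar \Sigma$ term. The crucial preparatory observation, already recorded in the text, is that $v_\beta(x)$ is itself a proximal subgradient of $L$, so combining \eqref{eqn:subdiff-and-Dini} with the assumed \eqref{eqn:nonsmooth-CLF-decay}-type inequality and the minimising choice \eqref{eqn:inf-conv-ctrl} yields $\scal{v_\beta(X_k), f(X_k, \mu(X_k))} \le -\alpha_3(\nrm{X_k})$ whenever $X_k$ lies outside the core ball. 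This furnishes the leading decay.

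For the \SH-analysis (Case~1, outside the core) I would start from the non-smooth Taylor estimate \eqref{eqn:nonsmooth-Taylor-actual}, decompose the sampled right-hand side $F_k = \Phi_k + \Sigma_k$ as in \eqref{eqn:rhs-sampled}, and pivot the drift part around $f(X_k, \mu(X_k))$ via \eqref{eqn:Fk-pivot}. Using the local Lipschitz bounds on $f$ this collapses, exactly as in \eqref{eqn:nonsmooth-Taylor-decay-term}, to
\begin{equation*}
	\E{L_t - L_k} \le - \bar \alpha_3 \Delta t + \bigo{\Delta t^2} + \E{\Delta t \scal{v_\beta(X_k), \Sigma_k}},
\end{equation*}
with $\Delta t = t - k\delta$. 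Choosing $\bar \delta$ small absorbs the $\bigo{\Delta t^2}$ remainder and leaves a net decay $\E{L_t - L_k} \le -\tfrac{\bar \alpha_3}{2}\Delta t$ away from the core ball. Case~2 (inside the target ball) is the analogue of the target-ball argument of Theorem~\ref{thm:stoch-stab}: the continuity bound $\E{L_t - L_k} \le \lip{L}{R^*}(\bar f + \bar \sigma \bar Z)\delta$ keeps the mean Lyapunov value below $l^*$ once it has first dropped there, so $\ball_r$ is invariant in mean. Combining the two cases gives $(r,R)$-convergence in mean to $\rho$ at best in the sense of Definition~\ref{dfn:conv-mean}, stability in probability follows from the a.\,s.\ bound $\nrm{X_t} \le R^*$ together with a Markov-inequality estimate as in Theorem~\ref{thm:stoch-stab}, and mean-square convergence is inherited exactly as in Corollary~\ref{crl:mean-square}.

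The main obstacle is the residual noise term $\E{\Delta t \scal{v_\beta(X_k), \Sigma_k}}$. In the Brownian setting of Theorem~\ref{thm:stoch-stab} the corresponding quantity was a genuine It\=o integral and vanished under the expectation; here $\Sigma_k$ is an ordinary path-wise integral against the bounded noise and does \emph{not} vanish, so it competes with the decay on the same scale, its coefficient being controlled by $\nrm{v_\beta(X_k)}\,\bar \sigma \bar Z$. The uniform boundedness of $\sigma$ is exactly what tames it: reasoning as in Corollary~\ref{crl:determ2stoch}, this coefficient is dominated by $\alpha_3(\nrm{x})$ outside some ball $\ball_{\tilde r}$, while inside $\ball_{\tilde r}$ it contributes only a constant $\bar \Sigma$-type budget; the former preserves strict decay away from $\ball_{\tilde r}$, and the latter caps convergence at $\rho = \tilde r$ at best --- precisely the ``$\rho$ depends on $\sigma$'' clause of Definition~\ref{dfn:pract-stab}. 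Making this domination uniform in the step index $k$, and checking that the minimiser \eqref{eqn:inf-conv-ctrl} indeed maps $\X$ into the prescribed compact $\U_\X$, are the remaining points that require care but no new ideas.
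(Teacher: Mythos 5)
Your proposal matches the paper's own (sketched) proof essentially step for step: the paper establishes this theorem by re-running the two-case \SH-analysis of Theorem \ref{thm:stoch-stab} with the inf-convolution $L_\beta$, the proximal subgradient $v_\beta$, and the policy \eqref{eqn:inf-conv-ctrl} in place of the smooth machinery, using the decomposition \eqref{eqn:rhs-sampled} and pivot \eqref{eqn:Fk-pivot} to obtain the decay \eqref{eqn:nonsmooth-Taylor-decay-term}, noting that bounded noise leaves only Lebesgue integrals (so the mean increment bound becomes $\bar f \delta + \bar \sigma \delta$ and trajectories are a.~s.\ bounded), and taming the residual noise term $\E{\Delta t \scal{v_\beta(X_k), \Sigma_k}}$ via uniform boundedness of $\sigma$ exactly as in Corollary \ref{crl:determ2stoch}. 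Apart from cosmetic differences (an explicit noise bound $\bar Z$ rather than the paper's unit normalization), your proposal is correct and takes essentially the same approach.
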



Regarding robustness properties, the presented derivations may be generalized to the case of uncertainty.
Whereas the system and actuator uncertainties are relatively easy to address, the major problem is measurement noise \cite{Clarke2011-discont-stabilization}.
Still, the method of inf-convolutions described above can be shown robust in this regard \cite{Osinenko2019-rob-pract-stab}.
Merging this with Theorem \ref{thm:stoch-stab-nonsmooth} could achieve the desired robustness, but it is left for future work.
\addtolength{\textheight}{-12cm}   


\vspace{-0.25em}

\bibliography{bib/adversarial,
bib/analysis,
bib/automated-reasoning,
bib/computable,
bib/constr-math,
bib/ctrl-history,
bib/diff-games,
bib/discont-DE,
bib/Filippov-sol,
bib/formal-ctrl,
bib/lin-ctrl,
bib/logic,
bib/model-reduction,
bib/MPC,
bib/nonlin-ctrl,
bib/nonsmooth-analysis,
bib/opt-ctrl,
bib/Osinenko,
bib/perturb-thr,
bib/selector-misc,
bib/semiconcave,
bib/sensitivity,
bib/set-thr,
bib/sliding-mode,
bib/soft,
bib/stabilization,
bib/stochastic,
bib/topology,
bib/uncertainty,
bib/verified-integration,
bib/viability,
bib/metric-spaces,
bib/DP,
bib/constructing-LFs,
bib/Kalman-filter} 
\bibliographystyle{IEEEtran}

%

\end{document}